\newtheorem{theorem}{Theorem}
\newtheorem{lemma}[theorem]{Lemma}
\begin{document}

\title{Enumerating independent vertex sets in grid graphs}

\author[S. Oh]{Seungsang Oh}
\address{Department of Mathematics, Korea University, Seoul 02841, Korea}
\email{seungsang@korea.ac.kr}

\author[S. Lee]{Sangyop Lee}
\address{Department of Mathematics, Chung-Ang University, Seoul 06974, Korea} 
\address{Korea Institute for Advanced Study, Seoul 02455, Korea} 
\email{sylee@cau.ac.kr}

\thanks{2010 Mathematics Subject Classification: 05A15, 05C69, 15A99}
\thanks{The first author was supported by the National Research Foundation of Korea(NRF) grant funded
by the Korea government(MSIP) (No. NRF-2014R1A2A1A11050999).}
\thanks{The second author was supported by the National Research Foundation of Korea(NRF) grant funded
by the Korea government(MSIP) (No. NRF-2013R1A1A2A10064864).}

\begin{abstract}
A set of vertices in a graph is called independent 
if no two vertices of the set are connected by an edge.
In this paper we use the state matrix recursion algorithm, developed by Oh, 
to enumerate independent vertex sets in a grid graph 
and even further to provide the generating function with respect to the number of vertices.
We also enumerate bipartite independent vertex sets in a grid graph.
The asymptotic behavior of their growth rates is presented.
\end{abstract}

\maketitle

\section{Introduction} \label{sec:intro}

The Merrifield--Simmons index and the Hosoya index of a graph,
respectively introduced by Merrifield and Simmons~\cite{MS1, MS2, MS3} and by Hosoya~\cite{Ho},
are two prominent examples of topological indices for the study of the relation between molecular structure 
and physical/chemical properties of certain hydrocarbon compound, 
such as the correlation with boiling points~\cite{GP}.
An {\em independent\/} set of vertices/edges of a graph $G$ is a set of which 
no two vertices of the set are connected by a single edge.
The Merrifield--Simmons index is defined as the total number, denoted by $\sigma (G)$, of independent vertex sets,
while the Hosoya index is defined as the total number of independent edge sets.
Especially, finding the Merrifield--Simmons index of graphs is known as 
the Hard Square Problem in lattice statistics.

One of important problems is to determine the extremal graphs 
with respect to these two indices within certain prescribed classes.
For example, among trees with the same number of vertices,
Prodinger and Tichy~\cite{PT} proved that 
the star maximizes the Merrifield--Simmons index, while the path minimizes it.
The situation for the Hosoya index is absolutely opposite;
the star minimizes the Hosoya index, while the path maximizes it~\cite{GP}.
A good summary of results for extremal graphs of various types can be found in a survey paper~\cite{WG}.
The interested reader is referred, however, to other articles \cite{AD, An, HIHH, XLZ, Zh, ZYAW} 
that treat several classes of graphs such as fullerene graphs, trees with prescribed degree sequence, 
graphs with connectivity at most k and the generalized Aztec diamonds.
 
We also consider a bipartite vertex set $\mathcal{V}$ in a graph $G$ in which
some vertices of $\mathcal{V}$ are colored black and the others are white. 
We say that $\mathcal{V}$ is a {\em bipartite independent vertex set\/}
if the vertices of the same color are independent
(vertices with different colors may not be independent). 
The total number of bipartite independent vertex sets in $G$ will be called 
the bipartite Merrifield--Simmons index and denoted by $\beta(G)$.
See the drawings in Figure~\ref{fig:examples} for exampels.

\begin{figure}[h]
\includegraphics{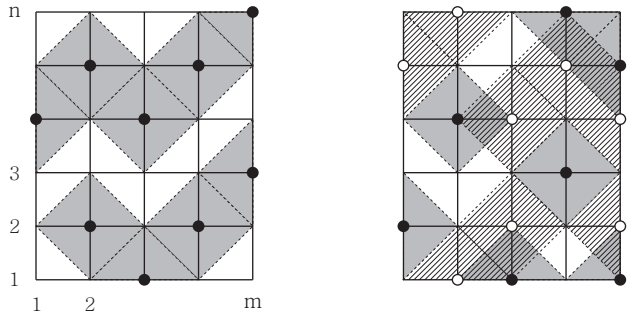}
\caption{Independent and bipartite independent vertex sets}
\label{fig:examples}
\end{figure}

Recently several important enumeration problems on two-dimensional square lattice models
have been solved by means of the {\em state matrix recursion algorithm\/}, developed by Oh in~\cite{OhD1}.
This algorithm provides recursive matrix-relations to enumerate
monomer and dimer coverings~\cite{OhD1},
multiple self-avoiding walks and polygons~\cite{OhP1},
and knot mosaics in quantum knot mosaic theory~\cite{OHLL}.
Furthermore, these recursive formulae also produce their generating functions.
Based upon these results, this algorithm shows considerable promise 
for further two-dimensional lattice model enumerations.

In this paper we use the state matrix recursion algorithm to calculate the Merrifield--Simmons index
of the $m \! \times \! n$ grid graph $G_{m \times n}$ 
and further its bipartite Merrifield--Simmons index.
Consider the generating function of independent vertex sets (IVSs) 
with variable $z$ in $G_{m \times n}$ defined by
$$ P_{m \times n}(z) \ = \ \sum k(d) \, z^d, $$
where $k(d)$ is the number of IVSs consisting of $d$ vertices.
Similarly consider the generating function for bipartite independent vertex sets (BIVSs) 
with variables $x$ and $y$ defined by
$$ Q_{m \times n}(x,y) = \sum k(c,d) \, x^c y^d, $$
where $k(c,d)$ is the number of BIVSs consisting of $c$ white vertices and $d$ black vertices.
We easily notice that $P_{m \times n}(z) = Q_{m \times n}(z,0)$.
These indices of $G_{m \times n}$ 
are then simply obtained by
$$ \sigma(G_{m \times n}) = P_{m \times n}(1) \text{ and }
\beta(G_{m \times n}) = Q_{m \times n}(1,1). $$

Hereafter $\mathbb{O}_k$ and $\mathbb{O}'_k$ denote 
the square zero-matrices of dimensions $2^k$ and $3^k$, respectively.

\begin{theorem} \label{thm:ivs}
The generating function for independent vertex sets is
\begin{equation*} \begin{aligned}
P_{m \times n}(z) & = \text{entry sum of the first column of } (A_m)^n \\
& = (1,1)\text{-entry of } (A_m)^{n+1},
\end{aligned} \end{equation*}
where $A_m$ is a $2^m \! \times \! 2^m$ matrix recursively defined by
$$A_{k+1} = \begin{bmatrix} A_k & B_k \\
z C_k & \mathbb{O}_k \end{bmatrix}\! , \
B_{k+1} = \begin{bmatrix} A_k & \mathbb{O}_k \\
z C_k &  \mathbb{O}_k \end{bmatrix} \mbox{ and \ }
C_{k+1} = \begin{bmatrix} A_k & B_k \\
\mathbb{O}_k & \mathbb{O}_k \end{bmatrix}\! ,$$
for $k=0, \dots, m \! - \! 1$, with seed matrices
$A_0 = B_0 = C_0 = \begin{bmatrix} 1 \end{bmatrix}$.
\end{theorem}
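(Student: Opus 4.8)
The plan is to realize $A_m$ as the transfer matrix that appends one column to the grid, and to prove the block recursion by building that transfer matrix one vertex at a time, in the spirit of the state matrix recursion algorithm. To set up, view $G_{m\times n}$ as $m$ rows by $n$ columns, and to a vertex subset $\mathcal I$ associate, for each column $j$, its \emph{state} $S_j\subseteq\{1,\dots,m\}$ recording which rows of column $j$ belong to $\mathcal I$. I index the rows and columns of a $2^m\times 2^m$ matrix by these subsets, placing $\varnothing$ first and letting membership of the element $m$ be the most significant bit of the index, so that splitting off that bit reproduces exactly the $2\times 2$ block structure in the statement. Since the only adjacencies in the grid are vertical ($(i,j)\!\sim\!(i{+}1,j)$) and horizontal ($(i,j)\!\sim\!(i,j{+}1)$), the set $\mathcal I$ is independent precisely when every $S_j$ is \emph{sparse} (contains no two consecutive integers) and $S_j\cap S_{j+1}=\varnothing$ for all $j$.

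Next I would name the explicit matrices the recursion should produce. Let $\widehat{A}_m$ have $(R,C)$-entry $z^{|R|}$ when $R$ and $C$ are both sparse with $R\cap C=\varnothing$, and $0$ otherwise; let $\widehat{B}_m$ be $\widehat{A}_m$ with every column whose index contains $m$ replaced by zeros, and $\widehat{C}_m$ be $\widehat{A}_m$ with every row whose index contains $m$ replaced by zeros. A direct transfer-matrix computation then shows that IVSs of $G_{m\times n}$ with $d$ vertices are in bijection with the admissible closed walks $\varnothing=S_0,S_1,\dots,S_n,S_{n+1}=\varnothing$, of weight $\prod_{j=1}^{n+1}(\widehat{A}_m)_{S_{j-1},S_j}=z^d$; hence $P_{m\times n}(z)=(\widehat{A}_m^{\,n+1})_{\varnothing,\varnothing}$. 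The same computation with the walk left open at one end gives $P_{m\times n}(z)=\sum_R(\widehat{A}_m^{\,n})_{R,\varnothing}$, and the factor $[R\text{ sparse}]$ in the $(R,C)$-entry forces every power of $\widehat{A}_m$ to vanish in non-sparse rows, so this sum is unaffected by extending it over all $R$. It therefore remains to prove $\widehat{A}_m=A_m$, $\widehat{B}_m=B_m$, and $\widehat{C}_m=C_m$.

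I would establish these three identities simultaneously by induction on $m$, the base case $m=0$ being the common seed $[1]$. For the inductive step, I would write each of $\widehat{A}_{k+1}$, $\widehat{B}_{k+1}$, $\widehat{C}_{k+1}$ in $2\times 2$ block form by conditioning on whether the row index and the column index contain $k+1$, and evaluate the four blocks using the defining conditions together with the observation that a set is sparse in $\{1,\dots,k{+}1\}$ iff it is sparse in $\{1,\dots,k\}$ and, when it contains $k+1$, omits $k$. The outcome is: the $(1,1)$ block is $\widehat{A}_k$; the $(2,2)$ block vanishes because two sets both containing $k+1$ cannot be disjoint (the horizontal adjacency of the two new vertices); the $(1,2)$ block is $\widehat{B}_k$, since putting $k+1$ into the column index forbids $k$ there; and the $(2,1)$ block is $z\,\widehat{C}_k$, where the explicit $z$ is the contribution of $k+1$ to the source-weight $z^{|R|}$ and the restriction $k\notin R$ comes from the same sparseness condition. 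The analogous block computations for $\widehat{B}_{k+1}$ and $\widehat{C}_{k+1}$, whose definitions additionally zero out the columns, respectively rows, indexed by sets containing $k+1$, annihilate one block column, respectively one block row, producing $\begin{bmatrix}\widehat{A}_k&\mathbb{O}_k\\ z\widehat{C}_k&\mathbb{O}_k\end{bmatrix}$ and $\begin{bmatrix}\widehat{A}_k&\widehat{B}_k\\ \mathbb{O}_k&\mathbb{O}_k\end{bmatrix}$. Substituting the induction hypothesis $\widehat{A}_k=A_k$, $\widehat{B}_k=B_k$, $\widehat{C}_k=C_k$ closes the induction.

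The hard part is organizational rather than conceptual: choosing the index ordering and the placement of the weight $z$ so that all the blocks of all three matrices land exactly where the statement puts them, and, in particular, recognizing the correct combinatorial reading of the auxiliary matrices $B_k$ and $C_k$ — they are the restrictions of $A_k$ that forbid the top vertex in the column index, respectively the row index — and seeing why the stray factor $z$ belongs to the $(2,1)$ block of $A_{k+1}$ and not to the $(1,2)$ block. Once those conventions are pinned down, each individual block evaluation is a one-step check.
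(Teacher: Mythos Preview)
Your argument is correct and is, at its core, the same transfer-matrix idea the paper uses, but you take a noticeably more direct route. The paper introduces a three-tile mosaic system with edge labels and adjacency rules, defines auxiliary bar state matrices $A_k,B_k,C_k$ indexed by the $r$--state of the rightmost tile, proves a \emph{different} block recursion for them (Lemma~\ref{lem:bar}), establishes $Y_{m\times n}=(A_m+B_m+C_m)^n$ (Lemma~\ref{lem:mn}), and only at the very end performs the substitution $A_k+B_k+C_k\mapsto A_k$, $A_k+B_k\mapsto B_k$, $A_k+C_k\mapsto C_k$ to obtain the recursion of the theorem. You instead identify $A_m$ outright as the column-state transfer matrix and read off clean combinatorial meanings for $B_m$ and $C_m$ (zeroing out columns, respectively rows, whose index contains the top element), so the theorem's recursion falls out in one induction with no intermediate matrices or change of variables. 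What the paper's packaging buys is reusability---the same mosaic framework is recycled verbatim for the bipartite case and in their other papers---while your version is shorter, more transparent about why $B_k$ and $C_k$ are exactly the right auxiliaries, and self-contained for this particular result.
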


\begin{theorem} \label{thm:bivs}
The generating function for bipartite independent vertex sets is 
\begin{equation*} \begin{aligned}
Q_{m\times n}(x,y) & = \text{entry sum of the first column of }(A_m)^n \\
& = (1,1) \text{-entry of }(A_m)^{n+1},
\end{aligned} \end{equation*}
where $A_m$ is a $3^m \! \times \! 3^m$ matrix defined by
$$A_{k+1} = \begin{bmatrix} A_k & B_k & C_k \\ 
x D_k & \mathbb{O}_k & x E_k \\ y F_k & y G_k & \mathbb{O}_k \end{bmatrix}\! , \ $$
$$B_{k+1} = \begin{bmatrix} A_k & \mathbb{O}_k & C_k \\ 
x D_k & \mathbb{O}_k & x E_k \\ y F_k & \mathbb{O}_k & \mathbb{O}_k \end{bmatrix}\! , \
C_{k+1} = \begin{bmatrix} A_k & B_k & \mathbb{O}_k \\ 
x D_k & \mathbb{O}_k & \mathbb{O}_k \\ y F_k & y G_k & \mathbb{O}_k \end{bmatrix}\! , \ $$
$$D_{k+1} = \begin{bmatrix} A_k & B_k& C_k \\ 
\mathbb{O}_k & \mathbb{O}_k & \mathbb{O}_k \\ y F_k & y G_k & \mathbb{O}_k \end{bmatrix}\! , \
E_{k+1} = \begin{bmatrix} A_k & B_k& \mathbb{O}_k \\ 
\mathbb{O}_k & \mathbb{O}_k & \mathbb{O}_k \\ y F_k & y G_k & \mathbb{O}_k \end{bmatrix}\! , \ $$
$$F_{k+1} = \begin{bmatrix} A_k & B_k& C_k \\ 
x D_k & \mathbb{O}_k & x E_k \\ \mathbb{O}_k & \mathbb{O}_k & \mathbb{O}_k \end{bmatrix}\! \text{ and \ }
G_{k+1} = \begin{bmatrix} A_k & \mathbb{O}_k & C_k \\ 
x D_k & \mathbb{O}_k & x E_k \\ \mathbb{O}_k & \mathbb{O}_k & \mathbb{O}_k \end{bmatrix}\! ,$$
for $k=0,\dots,m \! - \! 1$, with seed matrices
$A_0 = \cdots = G_0 = \begin{bmatrix} 1 \end{bmatrix}$. 
\end{theorem}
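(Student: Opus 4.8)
The plan is to exhibit $Q_{m\times n}(x,y)$ as a single entry of a power of the \emph{column-transfer matrix} of the model and then to identify that transfer matrix with the recursively defined $A_m$. Encode a BIVS of $G_{m\times n}$ by the map $s\colon V(G_{m\times n})\to\{o,w,b\}$ recording, for each vertex, whether it is absent, white, or black; bipartite independence says exactly that no edge joins two vertices both sent to $w$, nor two vertices both sent to $b$. Index the rows and columns of a $3^m\times3^m$ matrix $M$ by column-states $\mathbf c,\mathbf p\in\{o,w,b\}^m$, placing the all-$o$ state in position $1$, and set $M_{\mathbf c,\mathbf p}=x^{w(\mathbf c)}y^{b(\mathbf c)}$, where $w(\cdot)$ and $b(\cdot)$ count white and black entries, provided (i) each of $\mathbf c,\mathbf p$ is \emph{admissible}, i.e.\ has no two consecutive entries both $w$ and none both $b$, and (ii) $\mathbf c,\mathbf p$ are \emph{compatible}, i.e.\ never carry the same non-empty colour in the same coordinate; otherwise $M_{\mathbf c,\mathbf p}=0$.

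Expanding $(M^{n+1})_{1,1}$ as the sum over all sequences $\mathbf s_0=\mathbf 1,\mathbf s_1,\dots,\mathbf s_n,\mathbf s_{n+1}=\mathbf 1$ of $\prod_{j=0}^{n}M_{\mathbf s_{j+1},\mathbf s_j}$, one reads $\mathbf s_1,\dots,\mathbf s_n$ as the columns of a map $s$, flanked by two fictitious all-empty buffer columns: the admissibility factors then account precisely for the vertical edges, the compatibility factors for the horizontal edges, the buffer columns contribute only trivial factors, and the product collapses to $x^{c}y^{d}$ whenever $s$ is a valid BIVS with $c$ white and $d$ black vertices and to $0$ otherwise. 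Hence $(M^{n+1})_{1,1}=Q_{m\times n}(x,y)$. Since $M_{1,\mathbf c}$ equals $1$ if $\mathbf c$ is admissible and $0$ otherwise, while a nonzero $(M^n)_{\mathbf c,1}$ already forces $\mathbf c$ admissible, we also get $\sum_{\mathbf c}(M^n)_{\mathbf c,1}=\sum_{\mathbf c}M_{1,\mathbf c}(M^n)_{\mathbf c,1}=(M^{n+1})_{1,1}$, which is the first displayed formula.

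It remains to show $M=A_m$. For $0\le k\le m$ I would introduce $3^k\times3^k$ matrices indexed by partial column-states $\mathbf c,\mathbf p\in\{o,w,b\}^k$ — think of the last $k$ cells of a column, the first coordinate being the one adjacent to the not-yet-adjoined cell above — by declaring $(A_k)_{\mathbf c,\mathbf p}=x^{w(\mathbf c)}y^{b(\mathbf c)}$ when $\mathbf c,\mathbf p$ are admissible and compatible (as above, restricted to these $k$ coordinates) and $0$ otherwise; $B_k$ (resp.\ $C_k$) obtained from $A_k$ by zeroing every column whose index has first coordinate $w$ (resp.\ $b$); $D_k$ (resp.\ $F_k$) obtained from $A_k$ by zeroing every row whose index has first coordinate $w$ (resp.\ $b$); and $E_k$ (resp.\ $G_k$) obtained from $A_k$ by deleting both the rows with first coordinate $w$ and the columns with first coordinate $b$ (resp.\ the rows with first coordinate $b$ and the columns with first coordinate $w$). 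Then $A_m=M$, and for $k=0$ every tuple is empty and every constraint vacuous, so $A_0=\dots=G_0=[1]$, the stated seeds. To obtain the recursions, partition the rows and columns of the $3^{k+1}\times3^{k+1}$ matrices according to the colour $c_1,p_1\in\{o,w,b\}$ of the freshly adjoined cell, in the current and the previous column respectively; this is a $3\times3$ block decomposition into $3^k\times3^k$ blocks, and in the block $(c_1,p_1)$ one argues as follows. If $c_1=p_1\in\{w,b\}$ the two newly adjoined vertices span a horizontal edge with equal non-empty colour, so that block vanishes; otherwise compatibility at the new cell holds, the weight picks up the extra $x$, $y$, or $1$ dictated by $c_1$, and the only further restriction on the size-$k$ data is that (i) if $c_1\in\{w,b\}$ the old first coordinate of $\mathbf c$ may not equal $c_1$ — exactly the deletion defining $D_k$ or $F_k$ — and (ii) if $p_1\in\{w,b\}$ the old first coordinate of $\mathbf p$ may not equal $p_1$ — the deletion defining $B_k$ or $C_k$ — the combined case producing $E_k$ or $G_k$. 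Running through the nine blocks reproduces the displayed formula for $A_{k+1}$ verbatim, and the recursions for $B_{k+1},\dots,G_{k+1}$ then follow by applying the same row/column deletions, which act only on the first block-row or first block-column of $A_{k+1}$.

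The first two steps are routine transfer-matrix bookkeeping; the real work is the last one. The delicate point is to pin down exactly which of the seven auxiliary matrices corresponds to which combination of boundary deletions (which colour, and row, column, or both) and then to check that \emph{all seven} recursions reproduce the displayed block patterns precisely — including the positions of the zero blocks — and not merely the one for $A_{k+1}$. Verifying the $k=0$ base case and confirming that the chosen ordering of the $3^k$ states keeps the all-empty state in position $1$ throughout are minor but necessary loose ends.
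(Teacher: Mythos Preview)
Your argument is correct and complete in outline; the block-by-block verification of $A_{k+1}$ and the observation that $B_{k+1},\dots,G_{k+1}$ arise by zeroing whole block-rows or block-columns of $A_{k+1}$ do reproduce the seven displayed recursions exactly. (One small wording slip: the deletions act on the \emph{second} or \emph{third} block-row/column, not the first; but your intent is clear.)

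Your route, however, differs from the paper's. The paper does not work directly with the seven matrices of the theorem. It first sets up a mosaic-tile model with seven tiles $T_1,\dots,T_7$ carrying edge labels, reinterprets a BIVS as a suitably adjacent mosaic, and defines \emph{bar state matrices} $A_k,\dots,G_k$ indexed by the seven possible $r$-state letters; these satisfy a Kronecker-product recursion $X_{k+1}=X_1\otimes(\text{certain sum of }A_k,\dots,G_k)$ with explicit $3\times3$ seeds $A_1,\dots,G_1$, and the full state matrix is $(A_m+\cdots+G_m)^n$. Only at the very end does the paper perform the substitution $A_k+\cdots+G_k\mapsto A_k$, $A_k+C_k+D_k+E_k+F_k\mapsto B_k$, etc., to obtain the recursion stated in the theorem. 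Your approach bypasses both the tile formalism and this final change of variables: you define the seven matrices of the theorem directly by their combinatorial meaning (boundary-constrained transfer matrices) and verify the recursion in one stroke. The paper's detour buys generality---the mosaic machinery is a reusable template---while your approach buys transparency, since each of $B_k,\dots,G_k$ acquires an immediate interpretation as $A_k$ with specified rows or columns killed.
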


As listed in Table~\ref{tab:list}, $\sigma(G_{n \times n})$, for $m \! = \! n$, 
is known as the two-dimensional Fibonacci number in virtue of 
Prodinger and Tichy's use of the Fibonacci number of graphs~\cite{PT}.
Since this sequence grows in a quadratic exponential rate, we may consider the limits
$$ \lim_{m, \, n \rightarrow \infty} (\sigma(G_{m \times n}))^{\frac{1}{mn}} = \eta  \text{ \ and } 
\lim_{m, \, n \rightarrow \infty} (\beta(G_{m \times n}))^{\frac{1}{mn}} = \kappa, $$
which are called the {\em hard square constant\/} and the {\em bipartite hard square constant\/}, respectively. 
The existence of the hard square constant was shown in~\cite{CW, We},
and the most updated estimate 
$$\eta \approx 1.5030480824753322643220663294755536893857810$$ 
appeared in~\cite{Ba}.
A two-dimensional application of the Fekete's lemma gives 
another simple proof of the existence and
mathematical lower and upper bounds for these constants.

\begin{theorem}\label{thm:growth}
The double limits $\eta$ and $\kappa$ exist.
More precisely, for any positive integers $m$ and $n$,
$$ (\sigma(G_{m \times n}))^{\frac{1}{(m+1)(n+1)}} \leq \eta 
\leq (\sigma(G_{m \times n}))^{\frac{1}{mn}}, $$
$$ (\beta(G_{m \times n}))^{\frac{1}{(m+1)(n+1)}} \leq \kappa 
\leq (\beta(G_{m \times n}))^{\frac{1}{mn}}. $$
\end{theorem}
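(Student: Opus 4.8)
The plan is to reduce the theorem to two elementary counting inequalities for grids and then feed them into a two-dimensional version of Fekete's lemma. Throughout write $s_{m,n}=\sigma(G_{m\times n})$; the argument for $\beta(G_{m\times n})$ and $\kappa$ will be word for word the same, since both constructions below preserve the black/white coloring, so I will carry it out only for $\sigma$.

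First I would record the two inequalities. \emph{(Cut.)} Splitting an $(m_1+m_2)\times n$ grid along a grid line into an $m_1\times n$ block $L$ and an $m_2\times n$ block $R$, intersecting an IVS of the whole grid with $L$ and with $R$ produces IVSs of the two blocks, and the original set is recovered as their union since $V=L\sqcup R$; hence this map is injective and $s_{m_1+m_2,\,n}\le s_{m_1,n}\,s_{m_2,n}$, and symmetrically $s_{m,\,n_1+n_2}\le s_{m,n_1}\,s_{m,n_2}$. \emph{(Insert an empty strip.)} Placing an $m_1\times n$ block and an $m_2\times n$ block inside an $(m_1+m_2+1)\times n$ grid with one empty column between them, the union of any IVS of the first block and any IVS of the second is an IVS of the big grid, because columns at distance two share no edge; this map is injective, so $s_{m_1+m_2+1,\,n}\ge s_{m_1,n}\,s_{m_2,n}$ and likewise in the other coordinate. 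Iterating the second inequality gives $s_{(m+1)M-1,\ (n+1)N-1}\ge s_{m,n}^{\,MN}$ for all $M,N\ge 1$.

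Next I would prove that $\eta$ exists together with the upper bound. Since $1\le s_{m,n}\le 2^{mn}$, the numbers $\log s_{m,n}$ are finite and nonnegative. For each fixed $n$ the sequence $m\mapsto\log s_{m,n}$ is subadditive by the Cut inequality, so Fekete's lemma gives $\tfrac1m\log s_{m,n}\to\gamma(n):=\inf_m\tfrac1m\log s_{m,n}$ with $\tfrac1m\log s_{m,n}\ge\gamma(n)$; letting $m\to\infty$ in $\tfrac1m\log s_{m,n_1+n_2}\le\tfrac1m\log s_{m,n_1}+\tfrac1m\log s_{m,n_2}$ shows $\gamma$ is subadditive, so $\tfrac1n\gamma(n)\to\log\eta:=\inf_n\tfrac1n\gamma(n)$ with $\tfrac1n\gamma(n)\ge\log\eta$. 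Combining these, $\tfrac1{mn}\log s_{m,n}\ge\tfrac1n\gamma(n)\ge\log\eta$, which is exactly $(\sigma(G_{m\times n}))^{1/(mn)}\ge\eta$. For the matching estimate on the double $\limsup$, fix $\varepsilon>0$, choose $m_0,n_0$ with $\tfrac1{m_0n_0}\log s_{m_0,n_0}<\log\eta+\varepsilon$, and for a general $p\times q$ grid write $p=am_0+r$, $q=bn_0+t$ with $0\le r<m_0$, $0\le t<n_0$; splitting off $ab$ copies of the $m_0\times n_0$ block via the Cut inequality and bounding the three remaining factors through $s_{a,b}\le 2^{ab}$ (a total of $O(p+q)$ cells) yields $\log s_{p,q}\le ab\log s_{m_0,n_0}+O(p+q)$, hence $\limsup_{p,q\to\infty}\tfrac1{pq}\log s_{p,q}\le\log\eta+\varepsilon$. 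Letting $\varepsilon\to0$, the genuine double limit $\lim_{m,n\to\infty}(\sigma(G_{m\times n}))^{1/(mn)}$ exists and equals $\eta$.

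Finally, for the lower bound I would extract $(((m+1)M-1)((n+1)N-1))$-th roots in the iterated inequality from the first step, obtaining $(\sigma(G_{(m+1)M-1,\ (n+1)N-1}))^{1/(((m+1)M-1)((n+1)N-1))}\ge(\sigma(G_{m\times n}))^{MN/(((m+1)M-1)((n+1)N-1))}$, and let $M,N\to\infty$: the left side tends to $\eta$ by the double limit just established, while the exponent on the right tends to $1/((m+1)(n+1))$, so $\eta\ge(\sigma(G_{m\times n}))^{1/((m+1)(n+1))}$. Replacing $s_{m,n}$ by $\beta(G_{m\times n})$ everywhere (now with $1\le\beta(G_{m\times n})\le 3^{mn}$) gives the statements for $\kappa$. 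I expect the only real work to be in the third paragraph: running Fekete's lemma successively in the two coordinates and controlling the $O(p+q)$ boundary contribution, which is precisely what upgrades the two one-line inequalities of the second paragraph into a genuine two-dimensional limit with the stated sandwiching bounds.
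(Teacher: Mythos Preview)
Your proof is correct and follows essentially the same route as the paper: you establish the identical cut and insert-empty-strip inequalities, and then apply a two-dimensional Fekete argument to conclude. The only cosmetic difference is that the paper invokes the two-variable Fekete lemma as a black box (quoted from~\cite{OhD1}), whereas you reprove it inline in your third and fourth paragraphs.
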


Here we obtain $2.003942\cdots \leq \kappa \leq 2.181636\cdots $ 
by letting $m=9$ and $n=100$, computed by Matlab.

\begin{table}[h]
{\small
\begin{tabular}{clcc}      \hline \hline
 $n$ & $\sigma(G_{n \times n})$ & $(\sigma)^{\frac{1}{n^2}}$ &
 $(\sigma)^{\frac{1}{(n+1)^2}}$  \\    \hline
 1 & 2 & 2.000 & 1.189 \\
 2 & 7 & 1.627 & 1.241 \\
 3 & 63 & 1.585 & 1.296 \\
 4 & 1234 & 1.560 & 1.329 \\
 5 & 55447 & 1.548 & 1.354 \\
 6 & 5598861 & 1.540 & 1.373 \\
 7 & 1280128950 & 1.534 & 1.388 \\
 8 & 660647962955 & 1.530 & 1.399 \\
 9 & 770548397261707 & 1.527 & 1.409 \\
 10 & 2030049051145980050 & 1.524 & 1.417 \\
 11 & 12083401651433651945979 & 1.522 & 1.423 \\
 12 & 162481813349792588536582997 & 1.521 & 1.429 \\
 13 & 4935961285224791538367780371090 & 1.519 & 1.434 \\
 14 & 338752110195939290445247645371206783 & 1.518 & 1.439 \\
 15 & 52521741712869136440040654451875316861275 & 1.517 & 1.442 \\ \hline \hline
\end{tabular}
}
\vspace{4mm}
\caption{$\sigma(G_{n \times n})$ and its approximated ${\frac{1}{n^2}}$th
and ${\frac{1}{(n+1)^2}}$th powers}
\label{tab:list}
\end{table}

We adjust the main scheme of the state matrix recursion algorithm introduced in~\cite{OhD1}
to prove Theorem~\ref{thm:ivs} in Sections~\ref{sec:stage1}$\sim$\ref{sec:stage3}.

\section{Stage 1: Conversion to IVS mosaics} \label{sec:stage1}

This stage is dedicated to the installation of the mosaic system for IVSs on the grid graph.
Lomonaco and Kauffman~\cite{LK1, LK2} invented a mosaic system to give
a precise and workable definition of quantum knots representing an actual physical quantum system.
Oh {\em et al\/}. have developed a state matrix argument for the knot mosaic enumeration
in the papers \cite{HLLO, OHLL}.

This argument has been developed further into the state matrix recursion algorithm by which
we enumerate monomer--dimer coverings on the square lattice~\cite{OhD1}.
We follow the notion and terminology in~\cite{OhD1} with modification to IVSs. 
In this paper, we consider the three {\em mosaic tiles\/} $T_1$, $T_2$ and $T_3$ illustrated in Figure~\ref{fig:tile}.
Their horizontal and vertical side edges are labeled with two numbers 0, 1 and three letters a, b, c, respectively.

\begin{figure}[h]
\includegraphics{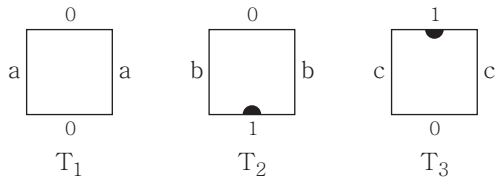}
\caption{Three mosaic tiles}
\label{fig:tile}
\end{figure}

For positive integers $m$ and $n$,
an {\em $m \! \times \! n$--mosaic\/} is an $m \! \times \! n$ rectangular array $M = (M_{ij})$ of those tiles,
where $M_{ij}$ denotes the mosaic tile placed at the $i$-th column from `left' to `right'
and the $j$-th row from `bottom' to `top'. 
We are exclusively interested in mosaics whose tiles match each other properly
to represent IVSs.
For this purpose we consider the following rules. \\

\noindent {\bf Horizontal adjacency rule:\/}
Abutting edges of adjacent mosaic tiles in a row are not labeled with any of the following pairs of letters: 
b/b, c/c. \\

\noindent {\bf Vertical adjacency rule:\/}
Abutting edges of adjacent mosaic tiles in a column must be labeled with the same number. \\

\noindent {\bf Boundary state requirement:\/}
All top boundary edges in a mosaic are labeled with number 0. (See Figure~\ref{fig:conversion}) \\

As illustrated in Figure~\ref{fig:conversion},
every IVS in $G_{m \times n}$ can be converted into 
an $m \! \times \! n$--mosaic which satisfies the three rules.
In this mosaic, 
two $T_2$'s (similarly $T_3$'s) cannot be placed adjacently in a row (horizontal adjacency rule),
while $T_2$ and $T_3$ can be adjoined along the edges labeled with number 1 (vertical adjacency rule).

\begin{figure}[h]
\includegraphics{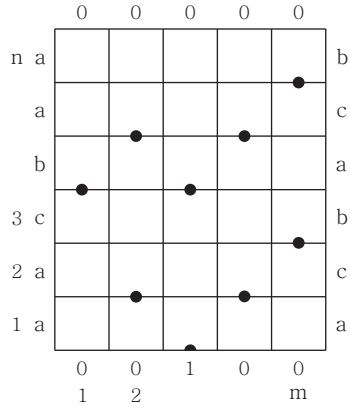}
\caption{Conversion of the IVS in Figure~\ref{fig:examples} to an IVS $m \! \times \! n$--mosaic}
\label{fig:conversion}
\end{figure}

A mosaic is said to be {\em suitably adjacent\/} if any pair of mosaic tiles
sharing an edge satisfies both adjacency rules.
A suitably adjacent $m \! \times \! n$--mosaic is called an {\em IVS $m \! \times \! n$--mosaic\/}
if it additionally satisfies the boundary state requirement.
The following one-to-one conversion arises naturally. \\

\noindent {\bf One-to-one conversion:\/}
There is a one-to-one correspondence between  
IVSs in $G_{m \times n}$ and IVS $m \! \times \! n$--mosaics.
Furthermore, the number of vertices in an IVS is equal to
the number of $T_2$ mosaic tiles in the corresponding IVS $m \! \times \! n$--mosaic.

\section{Stage 2: State matrix recursion formula} \label{sec:stage2}

Now we introduce two types of state matrices for suitably adjacent mosaics.

\subsection{States and state polynomials}
A {\em state\/} is a finite sequence of two numbers 0 and 1, or three letters a, b and c.
Let $p \leq m$ and $q \leq n$ be positive integers, 
and consider a suitably adjacent $p \! \times \! q$--mosaic $M$.
We use $d(M)$ to denote the number of appearances of $T_2$ tiles in $M$.
The {\em $b$--state\/} $s_b(M)$ ({\em $t$--state\/} $s_t(M)$) is 
the state of length $p$ obtained by reading off numbers on the bottom (top, respectively) 
boundary edges from right to left, 
and the {\em $l$--state\/} $s_l(M)$ ({\em $r$--state\/} $s_r(M)$) is 
the state of length $q$ obtained by reading off letters on the left (right, respectively) 
boundary edges from top to bottom as shown in Figure~\ref{fig:arrow}.

\begin{figure}[h]
\includegraphics{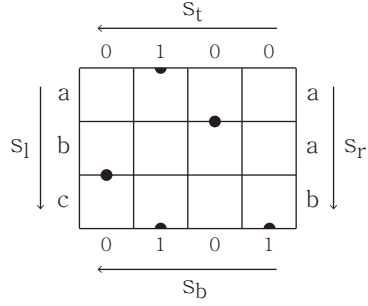}
\caption{A suitably adjacent $4 \! \times \! 3$--mosaic with four state indications:
$s_b(M) =$ 1010, $s_t(M) =$ 0010, $s_l(M) =$ abc, and $s_r(M) =$ aab}
\label{fig:arrow}
\end{figure}

Given a triple $\langle s_r, s_b, s_t \rangle$ of $r$--, $b$-- and $t$--states,
we associate the {\em state polynomial\/}:
$$ S_{\langle s_r, s_b, s_t  \rangle}(z) = \sum k(d) z^d, $$
where $k(d)$ equals the number of all suitably adjacent $p \! \times \! q$--mosaics $M$ such that 
$d(M)=d$, $s_r(M) = s_r$, $s_b(M) = s_b$ and $s_t(M) = s_t$.
Note that there is no restriction on the $l$--state of $M$.

\subsection{Bar state matrices}
Now consider suitably adjacent $p \! \times \! 1$--mosaics, which are called {\em bar mosaics\/}.
Bar mosaics of length $p$ have $2^p$ kinds of $b$-- and $t$--states, especially called {\em bar states\/}.
We arrange all bar states, which are binary digits, as usual.
For $1 \leq i \leq 2^p$, let $\epsilon^p_i$ denote the $i$-th bar state of length $p$.
The first bar state $\epsilon^p_1=$ 00$\cdots$0 is called {\em trivial\/}.

{\em Bar state matrix\/} $X_p$ ($X = A, B, C$)
for the set of suitably adjacent bar mosaics of length $p$ is a $2^p \! \times \! 2^p$ matrix $(x_{ij})$ given by  
$$ x_{ij} = S_{\langle \text{x}, \epsilon^p_i, \epsilon^p_j \rangle}(z), $$
where x $=$ a, b, c, respectively.
We remark that information on suitably adjacent bar mosaics is completely encoded 
in three bar state matrices $A_p$, $B_p$ and $C_p$.
 
\begin{lemma} [Bar state matrix recursion lemma] \label{lem:bar}
Bar state matrices $A_p$, $B_p$ and $C_p$ are recursively obtained by 
$$A_{k+1} = \begin{bmatrix} A_k \! + \! B_k \! + \! C_k &  \mathbb{O}_k \\
 \mathbb{O}_k & \mathbb{O}_k \end{bmatrix},$$
$$B_{k+1} = \begin{bmatrix} \mathbb{O}_k & \mathbb{O}_k \\
z \, A_k \! + \! z \, C_k &  \mathbb{O}_k \end{bmatrix} \mbox{ and\,  }
C_{k+1} = \begin{bmatrix} \mathbb{O}_k & A_k \! + \! B_k \\
 \mathbb{O}_k & \mathbb{O}_k \end{bmatrix}$$
with seed matrices
$$A_1 = \begin{bmatrix} 1 & 0 \\ 0 & 0 \end{bmatrix}, \
B_1 = \begin{bmatrix} 0 & 0 \\ z & 0 \end{bmatrix} \mbox{ and\, }
C_1 = \begin{bmatrix} 0 & 1 \\ 0 & 0 \end{bmatrix}.$$
\end{lemma}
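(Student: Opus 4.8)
The plan is to argue by induction on the length $p$ of the bar mosaic, with the base case $p=1$ handled by direct inspection of the three tiles in Figure~\ref{fig:tile}. A bar mosaic of length $1$ is a single tile, and inside a single row the vertical adjacency rule is vacuous, so a suitably adjacent bar mosaic is simply a sequence of tiles no two consecutive of which meet along a $b/b$ or $c/c$ edge. From Figure~\ref{fig:tile} each tile carries the same letter on its left and right vertical edges, namely $a$ for $T_1$, $b$ for $T_2$ and $c$ for $T_3$, while its bottom/top horizontal labels are $0/0$ for $T_1$, $1/0$ for $T_2$ and $0/1$ for $T_3$, and only $T_2$ contributes to $d$. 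This reads off $A_1$ (the lone tile $T_1$, with $b$-state $0$, $t$-state $0$, $d=0$), $B_1$ (the lone tile $T_2$, with $b$-state $1$, $t$-state $0$, $d=1$) and $C_1$ (the lone tile $T_3$, with $b$-state $0$, $t$-state $1$, $d=0$).

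For the inductive step I would invoke the evident bijection between suitably adjacent bar mosaics of length $k+1$ and pairs $(M',M_{k+1})$ in which $M'$ is a suitably adjacent bar mosaic of length $k$ and $M_{k+1}\in\{T_1,T_2,T_3\}$ is appended on the right, the sole constraint being that the right edge of $M'$ and the left edge of $M_{k+1}$ do not form a $b/b$ or $c/c$ pair. The right state of the extended mosaic equals the right edge label of $M_{k+1}$, so appending $T_1$, $T_2$ or $T_3$ produces a term of $A_{k+1}$, $B_{k+1}$ or $C_{k+1}$ respectively; and since the left edge of $M_{k+1}$ is correspondingly $a$, $b$ or $c$, the horizontal adjacency rule forces the right state of $M'$ to be unrestricted, to avoid $b$, or to avoid $c$ in the three cases. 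This yields the summands $A_k\!+\!B_k\!+\!C_k$, $zA_k\!+\!zC_k$ and $A_k\!+\!B_k$, the factor $z$ in the middle case coming from the newly added $T_2$ tile.

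It then remains to locate these summands within the four $2^k\!\times\!2^k$ blocks of the $2^{k+1}\!\times\!2^{k+1}$ matrices. Reading boundary labels from right to left, the $b$-state of the extended mosaic is the bottom label of $M_{k+1}$ followed by the $b$-state of $M'$, and likewise for the $t$-state; with bar states ordered as binary numerals with most significant digit first, this means the leading bit of the row index (respectively column index) of a nonzero entry is the bottom label (respectively top label) of $M_{k+1}$, while the remaining $k$ bits index $M'$ exactly as in $A_k$, $B_k$, $C_k$. Hence $T_1$ contributes only to the top-left block, $T_2$ only to the lower-left block and $T_3$ only to the upper-right block, with every other block zero; comparison with the displayed recursion for $A_{k+1}$, $B_{k+1}$, $C_{k+1}$ then completes the induction.

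I expect no serious difficulty, since once the mosaic dictionary of Section~\ref{sec:stage1} is available the lemma is essentially bookkeeping; the one point that needs genuine care is this last step, namely reconciling the right-to-left reading of the $b$- and $t$-states with the most-significant-digit-first ordering of bar states, so that prepending the label of the appended tile genuinely amounts to passing to the upper or lower half of the index range and therefore to the block positions dictated by the stated recursion.
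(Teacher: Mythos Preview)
Your proposal is correct and follows essentially the same approach as the paper's own proof: induction on the bar length, with the base case read off from the tiles and the inductive step obtained by peeling off the rightmost tile, using the horizontal adjacency rule to restrict the $r$--state of the remaining length-$k$ bar, and matching the bottom/top label of that rightmost tile to the block position in the $2\times 2$ partition of the $2^{k+1}\times 2^{k+1}$ matrix. Your treatment of the indexing (right-to-left reading and most-significant-bit-first ordering) is exactly the point the paper encodes in writing $1\epsilon^k_i$, $0\epsilon^k_j$, so there is no discrepancy.
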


Note that we may start with matrices
$A_0 = \begin{bmatrix} 1 \end{bmatrix}$ and $B_0 = C_0 = \begin{bmatrix} 0 \end{bmatrix}$
instead of $A_1$, $B_1$ and $C_1$.
Our proofs of Lemmas~\ref{lem:bar} and \ref{lem:mn} parallel  respectively those of Lemmas~5 and 6
in~\cite{OhD1} with slight modification.

\begin{proof}
We use induction on $k$.
A straightforward observation on the mosaic tiles establishes the lemma for $k=1$.

Assume that bar state matrices $A_k$, $B_k$ and $C_k$ satisfy the statement.
Consider the matrix $B_{k+1}$, which is of size $2^{k+1} \! \times \! 2^{k+1}$.
Partition this matrix into four block submatrices of size $2^k \! \times \! 2^k$, 
and consider the 21-submatrix of $B_{k+1}$,
i.e., the $(2,1)$-component in the $2 \! \times \! 2$ array of the four blocks.
The $(i,j)$-entry of the 21-submatrix is the state polynomial 
$S_{\langle \text{b}, \text{1}\epsilon^k_i, \text{0}\epsilon^k_j \rangle}(z)$
where 1$\epsilon^k_i$ (similarly 0$\epsilon^k_j$) is a bar state of length $k \! + \! 1$
obtained by concatenating two bar states 1 and $\epsilon^k_i$.
A suitably adjacent $(k \! + \! 1) \! \times \! 1$--mosaic corresponding to 
this triple $\langle \text{b}, \text{1}\epsilon^k_i, \text{0}\epsilon^k_j \rangle$
must have tile $T_2$ at the place of the rightmost mosaic tile, 
and so its second rightmost tile cannot be $T_2$ by the horizontal adjacency rule.
Thus the $r$--state of the second rightmost tile is either a or c.
By considering the contribution of the rightmost tile $T_2$ to the state polynomial,
one easily gets
$$S_{\langle \text{b}, \text{1}\epsilon^k_i, \text{0}\epsilon^k_j \rangle}(z) = 
z \cdot \big( (i,j)\text{-entry of } (A_k + C_k) \big).$$
Thus the 21-submatrix of $B_{k+1}$ is $z A_k + z C_k$.
The same argument gives Table~\ref{tab:barset} presenting all possible twelve cases as desired.
\end{proof}

\begin{table}[h]
\begin{tabular}{cccc}      \hline \hline
  & \ {\em Submatrix for\/} $\langle s_r, s_b, s_t \rangle$ \ & \ {\em Rightmost tile\/} \ &
{\em Submatrix\/} \\    \hline
\ $A_{k+1}$ \ & 11-submatrix $\langle \text{a}, \text{0} \! \cdot \! \cdot,\text{0} \! \cdot \! \cdot \rangle$ 
& $T_1$ & \ $A_k \! + \! B_k \! + \! C_k$ \ \\
$B_{k+1}$ & 21-submatrix $\langle \text{b}, \text{1} \! \cdot \! \cdot,\text{0} \! \cdot \! \cdot \rangle$ 
& $T_2$ & $z \, A_k \! + \! z \, C_k$ \\
$C_ {k+1}$ & 12-submatrix $\langle \text{c}, \text{0} \! \cdot \! \cdot,\text{1} \! \cdot \! \cdot \rangle$
& $T_3$ & $A_k \! + \! B_k$ \\
 & The other nine cases & None & $\mathbb{O}_k$ \\     \hline \hline
\end{tabular}
\vspace{4mm}
\caption{Twelve submatrices of $A_{k+1}$, $B_{k+1}$ and $C_{k+1}$}
\label{tab:barset}
\end{table}

\subsection{State matrices}

{\em State matrix\/} $Y_{m \times q}$ for the set of suitably adjacent $m \! \times \! q$--mosaics  
is a $2^m \! \times \! 2^m$ matrix $(y_{ij})$ given by 
$$ y_{ij} = \sum S_{\langle s_r, \epsilon^m_i, \epsilon^m_j \rangle}(z), $$
where the summation is taken over all $r$--states $s_r$ of length $q$.

\begin{lemma} [State matrix multiplication lemma] \label{lem:mn}
$$Y_{m \times n} = (A_m + B_m + C_m)^n.$$
\end{lemma}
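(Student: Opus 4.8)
\emph{Proof proposal.} The plan is to induct on the number of rows $n$, exploiting the fact that a suitably adjacent $m \! \times \! n$--mosaic is obtained by stacking a single bar mosaic of length $m$ on top of a suitably adjacent $m \! \times \! (n \! - \! 1)$--mosaic, and that the vertical adjacency rule is precisely the compatibility condition for such a stacking. For the base case $n = 1$, the mosaics enumerated by $Y_{m \times 1}$ are exactly the suitably adjacent bar mosaics of length $m$; each such bar mosaic has an $r$--state consisting of a single letter a, b or c, so sorting them by this letter and comparing with the definitions of $A_m$, $B_m$, $C_m$ gives
$$ Y_{m \times 1} = A_m + B_m + C_m. $$

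For the inductive step, assume $Y_{m \times (n-1)} = (A_m + B_m + C_m)^{n-1}$. Fix bar states $\epsilon^m_i$ and $\epsilon^m_j$. Cutting an $m \! \times \! n$--mosaic along the horizontal line that separates its top row from the rest sets up, for each intermediate bar state $\epsilon^m_k$, a bijection between the suitably adjacent $m \! \times \! n$--mosaics $M$ with $s_b(M) = \epsilon^m_i$, $s_t(M) = \epsilon^m_j$ whose internal cut edges read $\epsilon^m_k$, and the pairs $(M', M'')$ in which $M'$ is a suitably adjacent $m \! \times \! (n \! - \! 1)$--mosaic with $s_b(M') = \epsilon^m_i$, $s_t(M') = \epsilon^m_k$ and $M''$ is a suitably adjacent bar mosaic of length $m$ with $s_b(M'') = \epsilon^m_k$, $s_t(M'') = \epsilon^m_j$. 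Indeed, the vertical adjacency rule across the cut is exactly the requirement $s_t(M') = s_b(M'')$; no horizontal adjacency is created between tiles lying in different rows, so the horizontal adjacency rules for $M$ are equivalent to those for $M'$ and $M''$ separately; and conversely any compatible pair stacks back to a suitably adjacent $m \! \times \! n$--mosaic. Since $d(M) = d(M') + d(M'')$ the associated state polynomials multiply, and since the $r$--state of $M$ is merely the concatenation of those of $M'$ and $M''$ with no constraint between them, summing over all $r$--states of $M$ amounts to summing independently over those of $M'$ and $M''$. Taking $(i,j)$--entries and summing over $k$ therefore yields
$$ (Y_{m \times n})_{ij} = \sum_{k=1}^{2^m} (Y_{m \times (n-1)})_{ik}\,(Y_{m \times 1})_{kj}, $$
that is, $Y_{m \times n} = Y_{m \times (n-1)}\,Y_{m \times 1} = (A_m + B_m + C_m)^{n-1}(A_m + B_m + C_m) = (A_m + B_m + C_m)^n$.

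The only substantive point — and the one requiring care — is the verification of this stacking bijection: one must check, strictly from the wording of the horizontal and vertical adjacency rules, that gluing two suitably adjacent pieces along a row of matching horizontal edges neither violates nor newly imposes any adjacency constraint, and that the bookkeeping of the $r$--states and of the exponents of $z$ is exactly the bookkeeping of matrix multiplication. Everything else is routine, and the argument parallels the proof of Lemma~6 in~\cite{OhD1}.
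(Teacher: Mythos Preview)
Your proof is correct and follows essentially the same route as the paper's: induction on $n$, the base case $Y_{m\times 1}=A_m+B_m+C_m$ by sorting bar mosaics according to their single-letter $r$--state, and the inductive step by peeling off the topmost bar mosaic and observing that the vertical adjacency rule is exactly the matching condition that turns the stacking bijection into matrix multiplication. Your explicit remarks that $d(M)=d(M')+d(M'')$ and that the $r$--states concatenate freely make precise points the paper leaves implicit, but there is no substantive difference in approach.
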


\begin{proof}
Use induction on $n$.
For $n=1$, $Y_{m \times 1} =A_m + B_m + C_m$
since $Y_{m \times 1}$ counts suitably adjacent $m \! \times \! 1$--mosaics with any $r$--states.
Assume that $Y_{m \times k} = (A_m + B_m + C_m)^k$.
Consider a suitably adjacent $m \! \times \! (k \! + \! 1)$--mosaic $M^{m \times (k+1)}$.
Split it into two suitably adjacent $m \! \times \! k$-- and $m \! \times \! 1$--mosaics
$M^{m \times k}$ and $M^{m \times 1}$ by tearing off the topmost bar mosaic.
By the vertical adjacency rule,
the $t$--state of $M^{m \times k}$ and the $b$--state of $M^{m \times 1}$ must coincide
as shown in Figure~\ref{fig:expand}.

\begin{figure}[h]
\includegraphics{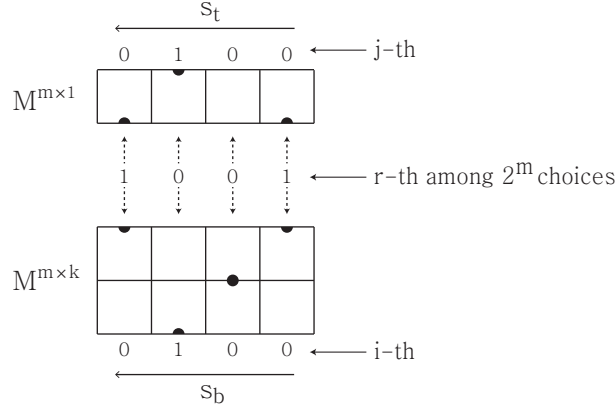}
\caption{Expanding $M^{m \times k}$ to $M^{m \times (k+1)}$}
\label{fig:expand}
\end{figure}

Let $Y_{m \times (k+1)} = (y_{ij})$, $Y_{m \times k} = (y_{ij}')$ and $Y_{m \times 1} = (y_{ij}'')$.
Note that $y_{ij}$ is the state polynomial 
for the set of suitably adjacent $m \! \times \! (k \! + \! 1)$--mosaics $M$ 
which admit splittings into $M^{m \times k}$ and $M^{m \times 1}$ satisfying
$s_b(M) = s_b(M^{m \times k}) = \epsilon^m_i$,
$s_t(M) = s_t(M^{m \times 1}) = \epsilon^m_j$, and
$s_t(M^{m \times k}) = s_b(M^{m \times 1}) = \epsilon^m_r$ ($1 \leq r \leq 2^m$).
Thus, 
$$ y_{ij} = \sum^{2^m}_{r=1} y_{ir}'  \cdot  y_{rj}''.$$
This implies
$$ Y_{m \times (k+1)} = Y_{m \times k} \cdot Y_{m \times 1} = (A_m + B_m + C_m)^{k+1},$$
and the induction step is finished.
\end{proof}

\section{Stage 3: State matrix analyzing} \label{sec:stage3}

We analyze state matrix $Y_{m \times n}$ to find the generating function $P_{m \times n}(z)$.

\begin{proof}[Proof of Theorem~\ref{thm:ivs}.]
The $(i,j)$-entry of $Y_{m \times n}$ is the state polynomial for the set of 
suitably adjacent $m \! \times \! n$--mosaics $M$ with
$s_b(M) = \epsilon^m_i$ and $s_t(M) = \epsilon^m_j$ (no restriction on $s_l(M)$ and $s_r(M)$).
According to the boundary state requirement,
IVSs in $G_{m \times n}$ are converted into 
suitably adjacent $m \! \times \! n$--mosaics $M$ with trivial $t$--state
as the left picture in Figure~\ref{fig:analyze}.
This means $s_b(M) = \epsilon^m_i$ ($i$ takes any value of $1,\dots,2^m$) and $s_t(M) = \epsilon^m_1$.
Thus the sum of the state polynomials in the first column of $Y_{m \times n}$ represents
the generating function $P_{m \times n}(z)$.
In short, we get 
$$P_{m \times n}(z) = \mbox{entry sum of the first column of } Y_{m \times n}.$$

On the other hand, as the right picture in Figure~\ref{fig:analyze},
IVS $m \! \times \! n$--mosaics can also be converted to
suitably adjacent $m \! \times \! (n \! + \! 1)$--mosaics with trivial $b$-- and $t$--states.
Therefore,
$$P_{m \times n}(z) = \mbox{(1,1)-entry of } Y_{m \times (n+1)}.$$
These equalities combined with Lemmas~\ref{lem:bar} and~\ref{lem:mn}
complete the proof.

Note that the recurrence relation in Lemma~\ref{lem:bar}
is easily translated into that of Theorem~\ref{thm:ivs} by replacing 
$A_k \! + \! B_k \! + \! C_k$, $A_k \! + \! B_k$ and $A_k \! + \! C_k$ 
with $A_k$, $B_k$ and $C_k$, respectively.
\end{proof}

\begin{figure}[h]
\includegraphics{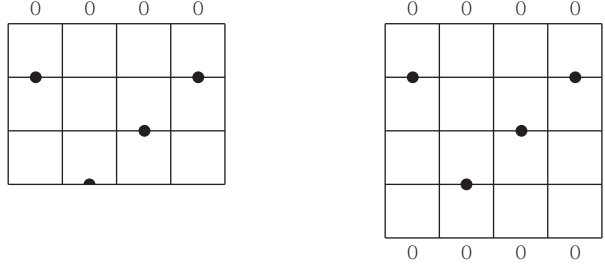}
\caption{Analyzing state matrix $Y_{m \times n}$}
\label{fig:analyze}
\end{figure}

\section{BIVS mosaics} \label{sec:bivs}
 
In this section we use the state matrix recursion algorithm to enumerate 
bipartite independent vertex sets.
We follow the argument in the proof of Theorem~\ref{thm:ivs}.

\begin{proof}[Proof of Theorem~\ref{thm:bivs}.]
We reformulate the state matrix recursion algorithm 
by using seven mosaic tiles $T_1, \dots, T_7$ illustrated in Figure~\ref{fig:tile2}.
Their horizontal and vertical side edges are labeled with three numbers 0, 1, 2 and 
seven letters a, b, c, d, e, f, g, respectively. 

\begin{figure}[h]
\includegraphics{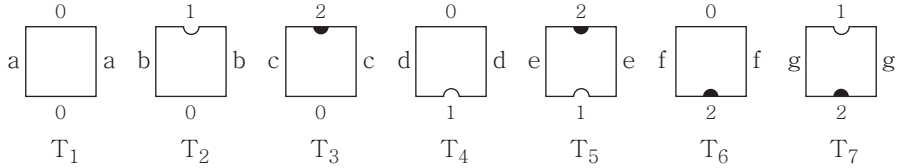}
\caption{Seven mosaic tiles}
\label{fig:tile2}
\end{figure}

The same vertical adjacency rule and boundary state requirement are employed,
while the horizontal adjacency rule and the corresponding one-to-one conversion are slightly changed
as follows. \\

\noindent {\bf Horizontal adjacency rule:\/} 
Abutting edges of adjacent mosaic tiles in a row are not labeled with any of the following pairs of letters:
b/b, c/c, d/d, e/e, f/f, g/g, b/g, g/b, c/e, e/c, d/e, e/d, f/g, g/f. \\

\noindent {\bf One-to-one conversion:\/}
There is a one-to-one correspondence between  
BIVSs in $G_{m \times n}$ and BIVS $m \! \times \! n$--mosaics.
Furthermore, the number of white (black) vertices in a BIVS is equal to 
the number of $T_4$ and $T_5$ ($T_6$ and $T_7$, respectively) mosaic tiles
in the corresponding BIVS $m \! \times \! n$--mosaic. \\

In the second stage, we find the corresponding bar state matrix recursion lemma (Lemma~\ref{lem:bar})
and state matrix multiplication lemma (Lemma~\ref{lem:mn}) as in Section~\ref{sec:stage2}.

\begin{lemma} \label{lem:bbar}
Bar state matrices $A_p, \dots, G_p$ are obtained by the recurrence relations:
\begin{equation*}
\begin{split}
A_{k+1} & = A_1 \otimes (A_k \! + \! B_k \! + \! C_k \! + \! D_k \! + \! E_k \! + \! F_k \! + \! G_k) \\
B_{k+1} & = B_1 \otimes (A_k \! + \! C_k \! + \! D_k \! + \! E_k \! + \! F_k) \\
C_{k+1} & = C_1 \otimes (A_k \! + \! B_k \! + \! D_k \! + \! F_k \! + \! G_k) \\
D_{k+1} & = D_1 \otimes (A_k \! + \! B_k \! + \! C_k \! + \! F_k \! + \! G_k) \\
E_{k+1} & = E_1 \otimes (A_k \! + \! B_k \! + \! F_k \! + \! G_k) \\
F_{k+1} & = F_1 \otimes (A_k \! + \! B_k \! + \! C_k \! + \! D_k \! + \! E_k) \\
G_{k+1}&  = G_1 \otimes (A_k \! + \! C_k \! + \! D_k \! + \! E_k)
\end{split}
\end{equation*}
with seed matrices
$$A_1=\begin{bmatrix} 1 & 0 & 0 \\ 0 & 0 & 0 \\ 0 & 0 & 0 \end{bmatrix}\! ,
B_1=\begin{bmatrix} 0 & 1 & 0 \\ 0 & 0 & 0 \\ 0 & 0 & 0 \end{bmatrix}\! ,
C_1=\begin{bmatrix} 0 & 0 & 1 \\ 0 & 0 & 0 \\ 0 & 0 & 0 \end{bmatrix}\! ,
D_1=\begin{bmatrix} 0 & 0 & 0 \\ x & 0 & 0 \\ 0 & 0 & 0 \end{bmatrix}\! ,$$
$$E_1=\begin{bmatrix} 0 & 0 & 0 \\ 0 & 0 & x \\ 0 & 0 & 0 \end{bmatrix}\! ,
F_1=\begin{bmatrix} 0 & 0 & 0 \\ 0 & 0 & 0 \\ y & 0 & 0 \end{bmatrix} \text{and }
G_1=\begin{bmatrix} 0 & 0 & 0 \\ 0 & 0 & 0 \\ 0 & y & 0 \end{bmatrix}\! .$$
\end{lemma}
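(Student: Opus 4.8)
The plan is to follow the proof of Lemma~\ref{lem:bar} line for line, making only the bookkeeping changes forced by the larger set of tiles: the three tiles become $T_1,\dots,T_7$ and the two edge--numbers $0,1$ become the three numbers $0,1,2$, so that a bar state of length $p$ is now a ternary word, a bar state matrix of length $p$ is $3^p\times3^p$, and the natural building block is a $3\times3$ array of $3^k\times3^k$ blocks --- which is exactly what the Kronecker products $X_1\otimes(\cdots)$ in the statement record. I would argue by induction on the length $p$.

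For the base case $p=1$ I would read the four boundary states and the vertex content straight off each of the seven one--tile bar mosaics: tile $T_i$ is the unique length-$1$ bar mosaic whose $r$--state is the $i$-th letter, its $b$-- and $t$--states are the numbers on its bottom and top edges, and by the one-to-one conversion it contributes $1$, $x$, or $y$ to the state polynomial according as it carries no vertex, a white vertex ($i=4,5$), or a black vertex ($i=6,7$). This reproduces the seven $3\times3$ seed matrices in the statement.

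For the inductive step, assume the seven recursions hold in length $k$, fix one matrix $X\in\{A,\dots,G\}$, and let $\mathrm x$ be the letter labelling the tile $T$ that belongs to $X$. I would partition $X_{k+1}$ into a $3\times3$ array of $3^k\times3^k$ blocks; the $(i,j)$-entry of its $(\alpha,\beta)$-block is the state polynomial $S_{\langle\mathrm x,\,\epsilon^1_\alpha\epsilon^k_i,\,\epsilon^1_\beta\epsilon^k_j\rangle}$ counting the suitably adjacent $(k\!+\!1)\!\times\!1$--mosaics $M$ with $s_r(M)=\mathrm x$, $s_b(M)=\epsilon^1_\alpha\epsilon^k_i$ and $s_t(M)=\epsilon^1_\beta\epsilon^k_j$. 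The rightmost tile of such an $M$ must be $T$, so the block is zero unless $(\alpha,\beta)$ locates the single nonzero entry of $X_1$, and in that case the bottom and top numbers of $T$, together with its vertex weight, realize that entry. Deleting $T$ leaves a suitably adjacent $k\!\times\!1$--mosaic $M'$ with $s_b(M')=\epsilon^k_i$, $s_t(M')=\epsilon^k_j$, and $d(M')$ equal to $d(M)$ minus the weight of $T$; since $T$ is fixed, $M'$ ranges exactly over the bar mosaics whose $r$--state $\mathrm y$ makes the abutting pair $\mathrm y/\mathrm x$ admissible under the horizontal adjacency rule (the vertical rule being vacuous within one row, and the boundary requirement not imposed on bar mosaics), and each such $M'$ re-extends to a unique $M$. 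Summing over the admissible letters $\mathrm y$ and invoking the induction hypothesis, every nonzero block of $X_{k+1}$ turns out to be one and the same sum of the matrices $A_k,\dots,G_k$, scaled by the corresponding entry of $X_1$ --- that is, $X_{k+1}=X_1\otimes\bigl(\sum_{\mathrm y}Y_k\bigr)$, where $Y_k$ denotes the bar state matrix attached to $\mathrm y$.

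What is then left to verify is that for each of the seven choices of $\mathrm x$ the set of admissible left--neighbour letters $\mathrm y$ coincides with the index set on the right of the claimed recursion; this is a finite inspection of the fourteen forbidden pairs, which shows that $\mathrm x=\mathrm a$ rules out no $\mathrm y$, while $\mathrm x=\mathrm b,\mathrm c,\mathrm d,\mathrm e,\mathrm f,\mathrm g$ rule out respectively $\{\mathrm b,\mathrm g\}$, $\{\mathrm c,\mathrm e\}$, $\{\mathrm d,\mathrm e\}$, $\{\mathrm c,\mathrm d,\mathrm e\}$, $\{\mathrm f,\mathrm g\}$, $\{\mathrm b,\mathrm f,\mathrm g\}$ --- matching $A_{k+1},\dots,G_{k+1}$ in turn; I would present this as a short table in the style of Table~\ref{tab:barset}. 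I expect the proof to be otherwise entirely routine, the single error-prone point being to keep the orientation of the forbidden pairs straight --- $\mathrm y$ is the right-edge label of the deleted neighbour whereas $\mathrm x$ is the left-edge label of $T$ (which for these tiles equals $T$'s right-edge label $\mathrm x$). Finally, as in the remark after Lemma~\ref{lem:bar}, I would note that the induction may instead be started from $A_0=[1]$ and $B_0=\cdots=G_0=[0]$.
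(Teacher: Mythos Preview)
Your proposal is correct and follows exactly the approach the paper intends: the paper states Lemma~\ref{lem:bbar} without its own proof, remarking only that the argument parallels that of Lemma~\ref{lem:bar} in Section~\ref{sec:stage2}, so your plan of transcribing that induction with the evident $2\to3$ and three-tile-to-seven-tile changes, and recording the block structure via the Kronecker product, is precisely what is called for. Your check of the forbidden-pair orientation and the resulting admissible neighbour sets for each letter is accurate and matches the seven recursions; the only thing you have added beyond the paper is the detail itself.
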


\begin{lemma} \label{lem:bmn}
$$Y_{m \times n} = (A_m + B_m + C_m + D_m + E_m + F_m + G_m)^n.$$
\end{lemma}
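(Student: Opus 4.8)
## Proof Proposal for Lemma~\ref{lem:bmn}

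The plan is to mirror exactly the induction argument used for Lemma~\ref{lem:mn}, since the structural setup for BIVS mosaics is identical to that for IVS mosaics except that the alphabet of bar states has grown from $\{0,1\}$ to $\{0,1,2\}$ and we now track two variables $x$ and $y$ instead of one. First I would record that, by definition, the state matrix $Y_{m \times q}$ is the $3^m \times 3^m$ matrix whose $(i,j)$-entry is $\sum S_{\langle s_r, \epsilon^m_i, \epsilon^m_j \rangle}(x,y)$, the sum being over all $r$--states $s_r$ of length $q$; here $\epsilon^m_i$ now ranges over the $3^m$ ternary strings of length $m$. The base case $n=1$ is immediate: $Y_{m \times 1}$ counts suitably adjacent $m \times 1$--mosaics (bar mosaics of length $m$) with arbitrary $r$--state, and since a bar mosaic's left-right structure is built entirely from the seven tiles whose bar state matrices are $A_m,\dots,G_m$, we get $Y_{m \times 1} = A_m + B_m + C_m + D_m + E_m + F_m + G_m$. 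This last identity is really the content of Lemma~\ref{lem:bbar} together with the observation that summing the seven bar state matrices amounts to summing over all possible left-edge letters a through g.

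For the inductive step, assume $Y_{m \times k} = (A_m + \cdots + G_m)^k$ and consider a suitably adjacent $m \times (k+1)$--mosaic $M$. As in the proof of Lemma~\ref{lem:mn}, I would tear off the topmost bar mosaic, splitting $M$ into a suitably adjacent $m \times k$--mosaic $M^{m \times k}$ sitting below an $m \times 1$--mosaic $M^{m \times 1}$. The vertical adjacency rule — unchanged in the BIVS setting — forces $s_t(M^{m \times k}) = s_b(M^{m \times 1})$, and this shared state is one of the $3^m$ ternary bar states. Writing $Y_{m \times (k+1)} = (y_{ij})$, $Y_{m \times k} = (y'_{ij})$ and $Y_{m \times 1} = (y''_{ij})$, the state polynomial $y_{ij}$ decomposes over the shared interface state $\epsilon^m_r$ as
$$ y_{ij} = \sum_{r=1}^{3^m} y'_{ir} \, y''_{rj}, $$
because $d(M) = d(M^{m \times k}) + d(M^{m \times 1})$ tile-count (white and black vertex counts add, hence the $x$-- and $y$--exponents add and the polynomials multiply), and because the splitting is a bijection once the interface state is fixed. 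This is exactly matrix multiplication, so $Y_{m \times (k+1)} = Y_{m \times k} \cdot Y_{m \times 1} = (A_m + \cdots + G_m)^{k+1}$, closing the induction.

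The only place requiring genuine care — and what I would flag as the main obstacle — is verifying that the interface decomposition really is a bijection, i.e.\ that gluing \emph{any} suitably adjacent $M^{m \times k}$ on top of \emph{any} suitably adjacent $M^{m \times 1}$ with matching interface state always yields a suitably adjacent $m \times (k+1)$--mosaic, with no new horizontal adjacency violations introduced across the seam and no double-counting. Since the horizontal adjacency rule constrains only tiles within a common row, gluing along a horizontal seam can never create a horizontal conflict, and the vertical rule is satisfied precisely by the matching-interface condition; so the decomposition is clean. The subtlety is purely that the expanded horizontal adjacency rule (now forbidding fourteen letter pairs rather than two) must not be forgotten, but it plays no role in the row-splitting argument and is already baked into the definitions of $A_m,\dots,G_m$ via Lemma~\ref{lem:bbar}. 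Thus the proof is a verbatim transcription of the proof of Lemma~\ref{lem:mn} with $2^m$ replaced by $3^m$, three tiles replaced by seven, and $z$ replaced by the pair $x,y$; I would present it that concisely.
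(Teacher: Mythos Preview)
Your proposal is correct and follows exactly the approach the paper intends: the paper does not give a separate proof of Lemma~\ref{lem:bmn} but simply states that the argument of Lemma~\ref{lem:mn} carries over verbatim, which is precisely what you do. One small slip: in your base case you write that summing the seven bar state matrices amounts to summing over all possible \emph{left}-edge letters, but in the paper's conventions the bar state matrices $A_m,\dots,G_m$ are indexed by the $r$--state (right-edge letter), with the $l$--state already unrestricted in the definition of the state polynomial; this does not affect the argument.
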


In the third stage, we analyze this state matrix as in Section~\ref{sec:stage3}, and
as done there, we replace
$A_k \! + \! B_k \! + \! C_k \! + \! D_k \! + \! E_k \! + \! F_k \! + \! G_k$, 
$A_k \! + \! C_k \! + \! D_k \! + \! E_k \! + \! F_k$,
$A_k \! + \! B_k \! + \! D_k \! + \! F_k \! + \! G_k$,
$A_k \! + \! B_k \! + \! C_k \! + \! F_k \! + \! G_k$,
$A_k \! + \! B_k \! + \! F_k \! + \! G_k$,
$A_k \! + \! B_k \! + \! C_k \! + \! D_k \! + \! E_k$,
$A_k \! + \! C_k \! + \! D_k \! + \! E_k$ 
with $A_k, \dots, G_k$, respectively,
to complete the proof.
\end{proof}

\section{Hard square constant} \label{sec:growth}

To prove Theorem~\ref{thm:growth}, 
we need the following result called Fekete's lemma with slight modification.

\begin{lemma}{\cite[Lemma~7]{OhD1}} \label{lem:growth}
Suppose that a double sequence $\{ a_{m,n} \}_{m, \, n \in \, \mathbb{N}}$ with $a_{m,n} \geq 1$ satisfies
$a_{m_1+m_2,n} \leq a_{m_1,n} \cdot a_{m_2,n} \leq a_{m_1 + m_2 + 1,n}$
and $a_{m,n_1+n_2} \leq a_{m,n_1} \cdot a_{m,n_2} \leq a_{m,n_1 + n_2 + 1}$
for all $m$, $m_1$, $m_2$, $n$, $n_1$ and $n_2$.
Then
$$ \lim_{m, \, n \rightarrow \infty} (a_{m,n})^{\frac{1}{mn}} = 
\inf_{m, \, n \in \, \mathbb{N}} (a_{m,n})^{\frac{1}{mn}} = 
\sup_{m, \, n \in \, \mathbb{N}} (a_{m,n})^{\frac{1}{(m+1)(n+1)}},$$
provided that the supremum exists.
\end{lemma}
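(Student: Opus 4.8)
The plan is to pass to logarithms and read the statement as a two–variable form of Fekete's lemma, in which the two chains of inequalities play complementary roles. Set $b_{m,n}=\log a_{m,n}$, so that $b_{m,n}\ge 0$ since $a_{m,n}\ge 1$. In each coordinate separately the hypotheses become subadditivity $b_{m_1+m_2,n}\le b_{m_1,n}+b_{m_2,n}$ together with the \emph{shifted superadditivity} $b_{m_1,n}+b_{m_2,n}\le b_{m_1+m_2+1,n}$, and likewise in $n$. Write $\lambda=\inf_{m,n}b_{m,n}/(mn)$ and $\mu=\sup_{m,n}b_{m,n}/((m+1)(n+1))$; here $\lambda$ is finite because $0\le\lambda\le b_{1,1}$, and $\mu$ is finite by the standing hypothesis. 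The goal is to show that the joint limit of $b_{m,n}/(mn)$ exists and equals $\lambda$, and that $\mu=\lambda$.

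First I would treat the infimum using only subadditivity. Fix $p,q$, and for $m\ge p$, $n\ge q$ write $m=kp+r$ and $n=lq+s$ with $0\le r<p$ and $0\le s<q$. Iterating subadditivity first in the column index and then in the row index tiles the rectangle by $p\times q$ blocks and yields $b_{m,n}\le kl\,b_{p,q}+k\,b_{p,s}+l\,b_{r,q}+b_{r,s}$, where the last three terms are bounded by a constant depending only on $p,q$ times $k$, $l$, or $1$. Dividing by $mn$ and using $k\le m/p$ and $l\le n/q$, the main term is at most $b_{p,q}/(pq)$ while each correction is $O(1/m)+O(1/n)$. Letting $m,n\to\infty$ gives $\limsup b_{m,n}/(mn)\le b_{p,q}/(pq)$, and taking the infimum over $p,q$ gives $\limsup\le\lambda$. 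Since every term satisfies $b_{m,n}/(mn)\ge\lambda$, we also have $\liminf\ge\lambda$, so the genuine double limit exists and equals $\lambda$.

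For the supremum I would reindex by setting $s_{m,n}=b_{m-1,n-1}$ for $m,n\ge 2$, which converts the shifted superadditivity of $b$ into honest superadditivity of $s$ in each coordinate; note that $s_{m,n}/(mn)=b_{m-1,n-1}/((m-1+1)(n-1+1))$ is exactly the quantity whose supremum is $\mu$. Because $(m-1)(n-1)/(mn)\to 1$ and $b_{m-1,n-1}/((m-1)(n-1))\to\lambda$ by the first part, the double limit of $s_{m,n}/(mn)$ again equals $\lambda$, and in particular $\mu\ge\lambda$. Conversely, iterating superadditivity along multiples gives $s_{kp,lq}\ge kl\,s_{p,q}$, hence $s_{kp,lq}/(kplq)\ge s_{p,q}/(pq)$; letting $k,l\to\infty$ the left side tends to $\lambda$, so $s_{p,q}/(pq)\le\lambda$ for every $p,q$, that is $\mu\le\lambda$. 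Combining the two bounds gives $\mu=\lambda$, so all three quantities in the statement coincide.

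The step I expect to require the most care is the first one, namely extracting the genuine joint limit rather than the two iterated limits: the tiling must be organized so that the leftover strips of width less than $p$ or height less than $q$ contribute only $o(mn)$, and this is precisely where the nonnegativity $b_{m,n}\ge 0$ (equivalently $a_{m,n}\ge 1$) and the one-coordinate submultiplicative bounds are used. The only other delicate point is the bookkeeping of the single index shift $s_{m,n}=b_{m-1,n-1}$, which is what aligns the normalization $(m+1)(n+1)$ with the shifted chain $a_{m_1,n}\,a_{m_2,n}\le a_{m_1+m_2+1,n}$ and thereby forces the supremum and the infimum to agree; the hypothesis that the supremum exists enters only to guarantee $\mu<\infty$ at the outset.
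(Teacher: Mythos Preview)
The paper does not prove this lemma; it is quoted verbatim from \cite[Lemma~7]{OhD1} and used as a black box in the proof of Theorem~\ref{thm:growth}. There is therefore no in-paper argument to compare against.

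Your proposal is correct and is the natural two-variable Fekete argument. The subadditive half is completely standard; the reindexing $s_{m,n}=b_{m-1,n-1}$ is the right device to convert the shifted superadditivity $b_{m_1,n}+b_{m_2,n}\le b_{m_1+m_2+1,n}$ into genuine superadditivity, and the chain $s_{kp,lq}\ge kl\,s_{p,q}$ together with the already-established limit pins down $\mu\le\lambda$. Two small points worth tightening: first, in the tiling bound $b_{m,n}\le kl\,b_{p,q}+k\,b_{p,s}+l\,b_{r,q}+b_{r,s}$ you should say explicitly how the cases $r=0$ or $s=0$ are handled (the corresponding terms simply drop out), since $b_{0,\cdot}$ is undefined; second, your own argument actually shows that $\mu=\lambda\le b_{1,1}<\infty$ unconditionally, so the proviso ``provided that the supremum exists'' is redundant rather than an extra hypothesis you need to invoke.
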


\begin{proof}[Proof of Theorem~\ref{thm:growth}.]
Consider the Merrifield--Simmons index $\sigma(G_{m \times n})$, 
simply denoted by $\sigma_{m \times n}$.
Obviously, $\sigma_{m \times n} \geq 1$ for all $m,n$.
The submultiplicative inequality
$\sigma_{(m_1+m_2) \times n} \leq \sigma_{m_1 \times n} \cdot \sigma_{m_2 \times n}$ 
is obvious because we can always split an IVS $(m_1 \! + \! m_2) \! \times \! n$--mosaic
into a unique pair of IVS $m_1 \! \times \! n$-- and $m_2 \! \times \! n$--mosaics.
On the other hand,
any two IVS $m_1 \! \times \! n$-- and $m_2 \! \times \! n$--mosaics
can be adjoined horizontally to create a new IVS $(m_1 \! + \! m_2 \! + \! 1) \! \times \! n$--mosaic
by inserting between them a $1 \! \times \! n$--mosaic consisting only of $T_1$ tiles as in Figure~\ref{fig:growth}.
Therefore $\sigma_{m_1 \times n} \cdot \sigma_{m_2 \times n} \leq \sigma_{(m_1+m_2+1) \times n}$.

The inequality 
$\sigma_{m \times (n_1+n_2)} \leq \sigma_{m \times n_1} \cdot \sigma_{m \times n_2}$ is also obvious 
because we can always split an IVS $m \! \times \! (n_1 \! + \! n_2)$--mosaic
into a unique pair of IVS $m \! \times \! n_1$-- and $m \! \times \! n_2$--mosaics
by deleting all vertices on the top boundary of the bottom-side $m \! \times \! n_1$--mosaic.
On the other hand,
any two IVS $m \! \times \! n_1$-- and $m \! \times \! n_2$--mosaics
$M^{m \times n_1}$ and $M^{m \times n_2}$
can be adjoined vertically to create a new IVS $m \! \times \! (n_1 \! + \! n_2 \! + \! 1)$--mosaic
by inserting a suitably adjacent bar $m \! \times \! 1$--mosaic 
whose $b$--state is trivial as $s_t(M^{m \times n_1})$ and $t$--state is $s_b(M^{m \times n_2})$ 
as in Figure~\ref{fig:growth}.
Therefore $\sigma_{m \times n_1} \cdot \sigma_{m \times n_2} \leq \sigma_{m \times (n_1+n_2+1)}$.
Since we use only three mosaic tiles at each site,
$\sup_{m, \, n} (\sigma_{m \times n})^{\frac{1}{(m+1)(n+1)}} \leq 3$, and now apply Lemma~\ref{lem:growth}.

For the bipartite Merrifield--Simmons index $\beta(G_{m \times n})$, this proof applies verbatim.
\end{proof}

\begin{figure}[h]
\includegraphics{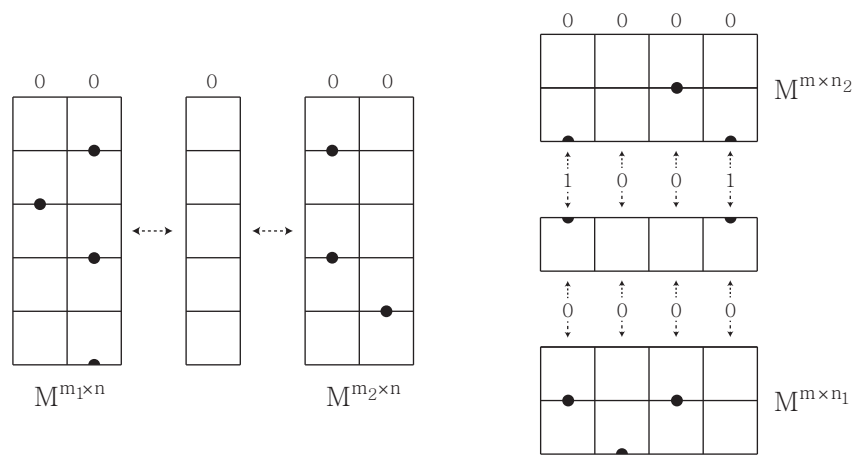}
\caption{Adjoining two IVS mosaics}
\label{fig:growth}
\end{figure}


\begin{thebibliography}{11}

\bibitem{AD} M. Ahmadi and H. Dastkhezr,
    {\em An algorithm for computing the Merrifield-Simmons Index},
    MATCH Commun. Math. Comput. Chem. \textbf{71} (2014) 355--359.  
\bibitem{An} E. Andriantiana,
    {\em Energy, Hosoya index and Merrifield-Simmons index of trees with prescribed degree sequence},
    Discrete Appl. Math. \textbf{161} (2013) 724--741.  
\bibitem{Ba} R. Baxter,
    {\em Planar lattice gases with nearest-neighbor exclusion},
    Ann. Comb. \textbf{3} (1999) 191--203.
\bibitem{CW} N. Calkin and H. Wilf,
    {\em The number of independent sets in a grid graph},
    SIAM J. Discrete Math. \textbf{11} (1998) 54--60.
\bibitem{GP} I. Gutman and O. Polansky,
    Mathematical Concepts in Organic Chemistry (Springer, Berlin) (1986).
\bibitem{HIHH} A. Hamzeh, A. Iranmanesh, S. Hossein-Zadeh and M. Hosseinzadeh,
    {\em The Hosoya index and the Merrifield-Simmons index of some graphs},
    Trans. Comb. \textbf{1} (2012) 51--60.
\bibitem{HLLO} K. Hong, H. Lee, H.J. Lee and S. Oh,
    {\em Small knot mosaics and partition matrices},
    J. Phys. A: Math. Theor. \textbf{47} (2014) 435201.
\bibitem{Ho} H. Hosoya,
    {\em Topological Index. A newly proposed quantity characterizing the topological nature of structural isomers
    of saturated hydrocarbons},
    Bull. Chem. Soc. Jpn. \textbf{44} (1971) 2332--2339.
\bibitem{LK1} S. Lomonaco and L. Kauffman,
    {\em Quantum knots},
    Quantum Information and Computation II, Proc. SPIE \textbf{5436} (2004) 268--284.
\bibitem{LK2} S. Lomonaco and L. Kauffman,
    {\em Quantum knots and mosaics},
    Quantum Inf. Process. \textbf{7} (2008) 85--115.
\bibitem{MS1} R. Merrifield and H. Simmons,
    {\em Enumeration of structure-sensitive graphical subsets: theory},
    Proc. Natl. Acad. Sci. USA \textbf{78} (1981) 692--695.
\bibitem{MS2} R. Merrifield and H. Simmons,
    {\em Enumeration of structure-sensitive graphical subsets: calculations},
    Proc. Natl. Acad. Sci. USA \textbf{78} (1981) 1329--1332.
\bibitem{MS3} R. Merrifield and H. Simmons,
    Topological Methods in Chemistry (Wiley, New York) (1989).
\bibitem{OhD1} S. Oh,
    {\em State matrix recursion algorithm and monomer--dimer problem},
    (preprint).
\bibitem{OhP1} S. Oh and K. Hong,
    {\em Multiple self-avoiding walk and polygon enumeration by state matrix recursion algorithm},
    (preprint).
\bibitem{OHLL} S. Oh, K. Hong, H. Lee and H. J. Lee,
    {\em Quantum knots and the number of knot mosaics},
    Quantum Inf. Process. \textbf{14} (2015) 801--811.
\bibitem{PT} H. Prodinger and R. Tichy,
    {\em Fibonacci numbers of graphs},
    Fibonacci Quart. \textbf{20} (1982) 16--21.
\bibitem{WG} S. Wagner and I. Gutman,
    {\em Maxima and minima of the Hosoya index and the Merrifield--Simmons index:
    a survey of results and techniques},
    Acta Appl. Math. \textbf{112} (2010) 323--346.
\bibitem{We} K. Weber,
    {\em On the number of stable sets in an $m \! \times \! n$ lattice},
    Rostock. Math. Kolloq. \textbf{34} (1988) 28--36.
\bibitem{XLZ} K. Xu, J. Li and L. Zhong,
    {\em The Hosoya indices and Merrifield-Simmons indices of graphs with connectivity at most $k$},
    Appl. Math. Lett. \textbf{25} (2012) 476--480.
\bibitem{Zh} Z. Zhang,
    {\em Merrifield-Simmons index of generalized Aztec diamonds and related graphs},
    MATCH Commun. Math. Comput. Chem. \textbf{56} (2006) 625--636.
\bibitem{ZYAW} Z. Zhu, C. Yuan, E. Andriantiana and S. Wagner,
    {\em Graphs with maximal Hosoya index and minimal Merrifield-Simmons index},
    Discrete Math. \textbf{329} (2014) 77--87.

\end{thebibliography}
\end{document}